\renewcommand{\theequation}{\thesection.\arabic{equation}}
\newtheorem{lem}{Lemma}[section]
\newtheorem{thm}{Theorem} [section]
\newtheorem{prop}{Proposition} [section]
\newtheorem{coro}{Corollary}[section]
\title{ Local Method for  Compositional Inverses of Permutational Polynomials
\thanks{Supported By NSF of China No. 12171163 }}
\author{Pingzhi Yuan\thanks{ P. Yuan is with School of  of Mathematical Science, South China Normal University,  Guangzhou 510631, China (email: yuanpz@scnu.edu.cn).}}
    \date{}
\begin{document}
\baselineskip15pt \maketitle
\renewcommand{\theequation}{\arabic{section}.\arabic{equation}}
\catcode`@=11 \@addtoreset{equation}{section} \catcode`@=12

    \begin{abstract}In this paper, we   provide a   local method to find  compositional inverses of all PPs, some new PPs and their compositional inverses are given.

\end{abstract}

{\bf Keywords:}
 Finite fields, permutation polynomials,   compositional inverses, surjection.

\section{Introduction}

\,\,\, Let $q$ be a prime power, $\mathbb{F}_q$ be the finite field of order $q$, and $\mathbb{F}_q[x]$
be the ring of polynomials in a single indeterminate $x$ over $\mathbb{F}_q$. A polynomial
$f \in\mathbb{F}_q[x]$ is called a {\em permutation polynomial} (PP for short) of $\mathbb{F}_q$ if it induces
a bijective map from $\mathbb{F}_q$ to itself.

Constructing PPs and explicitly determining the compositional inverse of a PP are useful because a PP and its inverse are required in many applications. For example, involutions are particularly useful ( as a part of a block ciper) in devices with limited resources. However, it is difficult to determine whether  a given polynomial over a finite field is a PP or not, and it is not easy to find the explicit compositional inverse of a random PP, except for several well-known classes of PPs, which have very nice structure. See   \cite{CH02, DY06, LC07, LQW19, LN97, NLQW21, TW14, TW17, W07, W17, Wu14, WL13, WL13J, YD11, YD14, ZYLHZ19, ZY18, ZWW20} for more details.

In 2011, Akbrary, Ghioca and Wang  \cite{AGW11} proposed a powerful method called the AGW criterion for constructing PPs. In the present paper, we will present a local method to find the compositional inverses of PPs.

The rest of this paper is organized as follows. In Section 2, we present the local criteria for a polynomial to be a PP. In Section 3, we present some known PPs and their compositional inverses by applying the results in Section 2.  In Section 4, we use the  method developed in Section 2 to give necessary and sufficient conditions for two kinds of polynomials to be PPs, and we also give their compositional inverses.

We know that every map from $\mathbb{F}_q$ to $\mathbb{F}_q$ can be viewed as a polynomial in the ring $\left(\mathbb{F}_q[x]/(x^q-x), +, \circ\right)$, where the two operations are the addition and the composition of two polynomials modulo $x^q-x$. For a polynomial, $f(x)$ is a PP over $\mathbb{F}_q$ if and only if $f(x)$ is an unit in the ring $\left(\mathbb{F}_q[x]/(x^q-x), +, \circ\right)$, and we use $f^{-1}(x)$ to denote the compositional inverse of $f(x)$. Hence we view the polynomials over $\mathbb{F}_q$ as  elements in the ring $\left(\mathbb{F}_q[x]/(x^q-x), +, \circ\right)$ thorough the paper.

\section{Local criteria for a polynomial to be a PP}

To begin with, we give the following simple and useful result for a map to be a bijection. We have


\begin{lem}\label{lem2.1}   {\rm ( Local criterion) }
Let $A$ and   $S$ be finite sets and let $f:A\rightarrow A$ be a map. Then $f$ is a bijection if and only if for any surjection $\psi: A\rightarrow S$, $\varphi=\psi\circ f$ is a surjection and $f$ is
injective on $\varphi^{-1}(s)$ for each $s\in S$.
$$\xymatrix{
  A \ar[rr]^{f} \ar[dr]_{\varphi}
                &  &   A \ar[dl]^{\psi}    \\
                & S                 }$$
\end{lem}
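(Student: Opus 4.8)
The plan is to prove the two implications separately, reducing the harder (backward) direction to the standard fact that a self-map of a finite set is a bijection as soon as it is surjective.

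\emph{Forward direction.} Assume $f$ is a bijection and let $\psi\colon A\to S$ be any surjection. Since $f(A)=A$, we get $\varphi(A)=\psi(f(A))=\psi(A)=S$, so $\varphi=\psi\circ f$ is surjective. Moreover $f$ is injective on all of $A$, hence injective on every subset, in particular on each fiber $\varphi^{-1}(s)$. This settles the ``only if'' part with essentially no computation.

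\emph{Backward direction.} Here I assume the stated conditions hold for a surjection $\psi$ (any one such $\psi$ suffices, and at least one exists whenever $|S|\le|A|$, which is the only non-vacuous case). The key observation is that the fibers of $\varphi$ are carried into the fibers of $\psi$: if $x\in\varphi^{-1}(s)$ then $\psi(f(x))=\varphi(x)=s$, so $f(\varphi^{-1}(s))\subseteq\psi^{-1}(s)$. Since $f$ is injective on $\varphi^{-1}(s)$, this gives $|\varphi^{-1}(s)|\le|\psi^{-1}(s)|$ for every $s\in S$. Now I run a global count: the families $\{\varphi^{-1}(s)\}_{s\in S}$ and $\{\psi^{-1}(s)\}_{s\in S}$ each partition $A$, so $\sum_{s}|\varphi^{-1}(s)|=|A|=\sum_{s}|\psi^{-1}(s)|$. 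Combined with the termwise inequalities, equality must hold in each term, i.e. $|\varphi^{-1}(s)|=|\psi^{-1}(s)|$ for all $s$. An injective map between finite sets of equal cardinality is a bijection, so $f$ restricts to a bijection $\varphi^{-1}(s)\to\psi^{-1}(s)$, whence $f(\varphi^{-1}(s))=\psi^{-1}(s)$. Taking the union over $s\in S$ yields $f(A)=A$, so $f$ is surjective; being a self-map of the finite set $A$, it is therefore a bijection.

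The step I expect to be the crux is the passage from the termwise inequalities to termwise equalities via the global cardinality identity: this is precisely where the two hypotheses (fiberwise injectivity together with $\varphi=\psi\circ f$) and the finiteness of $A$ must act in concert. The only delicate point is the existence of a surjection $\psi\colon A\to S$ needed to instantiate the hypothesis; when none exists the ``for any'' clause is vacuous, and the statement should be read under the implicit assumption $|S|\le|A|$.
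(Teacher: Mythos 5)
Your proof is correct, but it goes by a different route than the paper's. The paper handles the backward direction by proving \emph{injectivity} directly, in two lines: if $f(a)=f(b)$, then applying $\psi$ gives $\varphi(a)=\varphi(b)=s$, so $a,b$ lie in the same fiber $\varphi^{-1}(s)$, and the fiberwise injectivity hypothesis forces $a=b$; finiteness of $A$ then upgrades injectivity to bijectivity. You instead prove \emph{surjectivity}, via the counting argument $|\varphi^{-1}(s)|\le|\psi^{-1}(s)|$ summed over $s\in S$, forcing termwise equality and hence $f(\varphi^{-1}(s))=\psi^{-1}(s)$ for every $s$; finiteness again finishes the job. The paper's argument is shorter, and notably neither proof actually needs the hypothesis that $\varphi$ is surjective --- fiberwise injectivity alone suffices. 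What your longer route buys is the structural conclusion that $f$ restricts to a bijection $\varphi^{-1}(s)\to\psi^{-1}(s)$ on each fiber, i.e.\ that the two fiber partitions of $A$ are matched up exactly by $f$; this is precisely the identification $\{\lambda^{-1}(s)\}=\{\varphi^{-1}(\bar{s})\}$ that the paper invokes separately when deducing the AGW criterion (Lemma 2.2) from this lemma. Your closing observation is also a fair one and applies equally to the paper's proof: both arguments tacitly need at least one surjection $\psi\colon A\to S$ to exist, so the statement should indeed be read under the implicit assumption $\sharp S\le\sharp A$, since otherwise the ``for any surjection'' hypothesis is vacuous while $f$ need not be bijective.
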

\begin{proof}The necessity is obvious.

Now we prove the sufficiency. If $f(a)=f(b)$ for some $a, b\in A$, then
$$\psi(f(a))=\psi(f(b)).$$
That is $\varphi(a)=\varphi(b)=s, s\in S$. Hence
$$a, b\in\varphi^{-1}(s).$$ Since  $f$ is
injective on $\varphi^{-1}(s)$ for each $s\in S$ and $f(a)=f(b)$, we get $a=b$. Therefore $f$ is a bijection.\end{proof}

Applying Lemma \ref{lem2.1}, we get the following result, which is called AGW criterion now.

\begin{lem}\label{lem2.2}   {\rm ( AGW criterion) }
Let $A, S$ and $\bar{S}$ be finite sets with $\sharp
S=\sharp\bar{S}$, and let $f:A\rightarrow A, h:
S\rightarrow\bar{S}$, $\lambda: A\rightarrow S$, and $\bar{\lambda}:
A\rightarrow \bar{S}$ be maps such that $\bar{\lambda}\circ
f=h\circ\lambda$. If both $\lambda$ and $\bar{\lambda}$ are
surjective, then the following statements are equivalent:

(i) $f$ is a bijective (a permutation of $A$); and

(ii) $h$ is a bijective from $S$ to $\bar{S}$ and if $f$ is
injective on $\lambda^{-1}(s)$ for each $s\in S$.
\end{lem}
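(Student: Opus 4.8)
The plan is to deduce the AGW criterion (Lemma \ref{lem2.2}) directly from the Local criterion (Lemma \ref{lem2.1}) by manufacturing an appropriate surjection onto a single set. The obstacle is that Lemma \ref{lem2.1} is phrased with one target set $S$ and a single surjection $\psi$, whereas the AGW setup has two sets $S$ and $\bar S$ linked by $h$. The key idea is that the equality $\sharp S=\sharp\bar S$ together with the commuting relation $\bar\lambda\circ f=h\circ\lambda$ lets us transfer everything to the set $S$, so that $\lambda$ plays the role of $\psi$ in the Local criterion.

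First I would prove the implication (ii)$\Rightarrow$(i). Assume $h$ is a bijection from $S$ to $\bar S$ and $f$ is injective on $\lambda^{-1}(s)$ for each $s\in S$. I want to apply Lemma \ref{lem2.1} with the surjection $\psi=\lambda$. To do so I must verify that $\varphi:=\psi\circ f=\lambda\circ f$ is a surjection and that $f$ is injective on each fiber $\varphi^{-1}(s)$. Using the commuting relation, observe that $\bar\lambda\circ f=h\circ\lambda$, and since $h$ is a bijection, $\lambda=h^{-1}\circ\bar\lambda\circ f$; hence $\lambda\circ f=h^{-1}\circ\bar\lambda\circ f\circ f$. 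This is awkward, so instead I would compute fibers directly: for $s\in S$, an element $a\in A$ satisfies $\lambda(f(a))=s$ iff $h^{-1}(\bar\lambda(f(a)))=\ $(something). The cleaner route is to note that $h$ being a bijection gives $\lambda(a)=h^{-1}(\bar\lambda(f(a)))$ is \emph{not} automatic; rather the natural surjection to use is $\psi=\lambda$ but with the fiber computation $\varphi^{-1}(s)=(\lambda\circ f)^{-1}(s)$. Surjectivity of $\varphi$ follows because $\bar\lambda$ is surjective and $h$ is a bijection: given $s\in S$, pick $\bar s=h(s)$, choose $a$ with $\bar\lambda(a)=\bar s$ is not quite it either. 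The correct mechanism is that $f(\lambda^{-1}(s))\subseteq\lambda^{-1}(h^{-1}(\ ))$—so I must track which fiber of $\lambda$ the image $f(\lambda^{-1}(s))$ lands in.

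The heart of the matter is the fiber-matching lemma implicit in the commuting square: the relation $\bar\lambda\circ f=h\circ\lambda$ means that $f$ maps the fiber $\lambda^{-1}(s)$ into the fiber $\bar\lambda^{-1}(h(s))$. This is the main obstacle and the step I expect to require the most care, because Lemma \ref{lem2.1} mixes $S$ and $\bar S$; I resolve it by identifying $\bar S$ with $S$ through the bijection $h$. Concretely, setting $\psi=h^{-1}\circ\bar\lambda:A\to S$, which is a surjection since $\bar\lambda$ is surjective and $h$ is a bijection, I get $\psi\circ f=h^{-1}\circ\bar\lambda\circ f=h^{-1}\circ h\circ\lambda=\lambda$. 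Thus $\varphi=\psi\circ f=\lambda$, which is surjective by hypothesis, and its fibers $\varphi^{-1}(s)=\lambda^{-1}(s)$ are exactly the sets on which $f$ is assumed injective. Lemma \ref{lem2.1} now applies verbatim and yields that $f$ is a bijection.

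Finally I would handle (i)$\Rightarrow$(ii). Assuming $f$ is a bijection, injectivity of $f$ on each $\lambda^{-1}(s)$ is immediate since $f$ is globally injective. For the bijectivity of $h$, I use $\sharp S=\sharp\bar S$, so it suffices to show $h$ is surjective (or injective). Surjectivity of $h$ follows from the commuting relation together with surjectivity of $\bar\lambda$ and bijectivity of $f$: given $\bar s\in\bar S$, choose $a\in A$ with $\bar\lambda(a)=\bar s$; since $f$ is onto, write $a=f(b)$, and then $h(\lambda(b))=\bar\lambda(f(b))=\bar\lambda(a)=\bar s$, exhibiting a preimage of $\bar s$ under $h$. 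Because the two finite sets have equal cardinality, surjectivity forces $h$ to be a bijection, completing the proof.
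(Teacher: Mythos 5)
Your proof is correct and follows essentially the same route as the paper: both directions are reduced to Lemma \ref{lem2.1}, the only cosmetic difference being that you take $\psi=h^{-1}\circ\bar{\lambda}$ so that $\varphi=\psi\circ f=\lambda$ on the nose, whereas the paper takes $\psi=\bar{\lambda}$, $\varphi=\bar{\lambda}\circ f=h\circ\lambda$ and then identifies the fibers of $h\circ\lambda$ with those of $\lambda$ once $h$ is a bijection. Your element-chase for (i)$\Rightarrow$(ii) (surjectivity of $h$ from surjectivity of $h\circ\lambda=\bar{\lambda}\circ f$, then bijectivity from $\sharp S=\sharp\bar{S}$) is exactly the paper's cardinality observation, unwound.
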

\begin{proof} Applying Lemma \ref{lem2.1} to $\psi=\bar{\lambda}$ and $\varphi=\bar{\lambda}\circ
f=h\circ\lambda$. Since $\lambda$ is a surjection and $\sharp
S=\sharp\bar{S}$, we have $\varphi=h\circ\lambda$ is a surjection if and only if $h$ is a bijection. On the other hand, if $h$ is a bijection, then we have
$$\{\lambda^{-1}(s), s\in S\}=\{\varphi^{-1}(\bar{s}), \bar{s}\in \bar{S}\}.$$
Hence we have the result.\end{proof}

We also have the following corollary.
\begin{coro}Let $A, S$ and $\bar{S}$ be finite sets with $\sharp
S=\sharp\bar{S}$, and let $f:A\rightarrow A, h:
S\rightarrow\bar{S}$, $\lambda: A\rightarrow S$, and $\bar{\lambda}:
A\rightarrow \bar{S}$ be maps such that $\bar{\lambda}\circ
f=h\circ\lambda$. If  $\bar{\lambda}$ is a
surjection and $h$ is a bijection, then the following statements are equivalent:

(i) $f$ is a bijective (a permutation of $A$); and

(ii) $\lambda$ is a surjection and $f$ is
injective on $\lambda^{-1}(s)$ for each $s\in S$.
\end{coro}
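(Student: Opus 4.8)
The plan is to reduce everything to the local criterion of Lemma \ref{lem2.1}, exactly as in the proof of Lemma \ref{lem2.2}, but reorganizing which data are taken as hypotheses and which form the stated equivalence. Since $\bar\lambda$ is assumed surjective, I may legitimately apply Lemma \ref{lem2.1} to the single surjection $\psi=\bar\lambda$, so that $\varphi=\bar\lambda\circ f=h\circ\lambda$. For this fixed $\psi$, Lemma \ref{lem2.1} asserts that $f$ is a bijection if and only if $\varphi$ is a surjection and $f$ is injective on $\varphi^{-1}(\bar s)$ for every $\bar s\in\bar S$. The whole proof then consists of rewriting these two conditions in terms of $\lambda$, using that $h$ is a bijection.

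For the surjectivity condition, I would observe that $\varphi=h\circ\lambda$ has image $h(\lambda(A))$, and since $h$ is a bijection of $S$ onto $\bar S$, one has $h(\lambda(A))=\bar S$ precisely when $\lambda(A)=S$. Thus $\varphi$ is a surjection if and only if $\lambda$ is a surjection. For the injectivity condition, I would use that the bijectivity of $h$ gives $\varphi^{-1}(\bar s)=\lambda^{-1}(h^{-1}(\bar s))$ for each $\bar s$, so that as $\bar s$ ranges over $\bar S$ the sets $\varphi^{-1}(\bar s)$ run over exactly the fibers $\lambda^{-1}(s)$, $s\in S$; concretely,
$$\{\varphi^{-1}(\bar s) : \bar s\in\bar S\}=\{\lambda^{-1}(s) : s\in S\}.$$
Hence $f$ is injective on every $\varphi^{-1}(\bar s)$ if and only if it is injective on every $\lambda^{-1}(s)$.

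Combining the two translations, the criterion of Lemma \ref{lem2.1} reads: $f$ is a bijection if and only if $\lambda$ is a surjection and $f$ is injective on $\lambda^{-1}(s)$ for each $s\in S$, which is precisely the asserted equivalence of (i) and (ii). I do not expect any genuine obstacle, since the argument is a bookkeeping translation through the bijection $h$; the only point requiring care is checking that the hypotheses of Lemma \ref{lem2.1} are met, namely that the map fed into it as $\psi$ is surjective. This is guaranteed exactly by the assumption that $\bar\lambda$ is a surjection, which is why that hypothesis (rather than surjectivity of $\lambda$, as in Lemma \ref{lem2.2}) is the one imposed here, with the surjectivity of $\lambda$ instead migrating into condition (ii).
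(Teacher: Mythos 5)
Your proof is correct and follows essentially the approach the paper intends: the corollary is stated there without its own proof, immediately after Lemma \ref{lem2.2}, whose proof applies Lemma \ref{lem2.1} with $\psi=\bar{\lambda}$ and $\varphi=\bar{\lambda}\circ f=h\circ\lambda$ and uses the same fiber identification $\{\varphi^{-1}(\bar{s}),\ \bar{s}\in\bar{S}\}=\{\lambda^{-1}(s),\ s\in S\}$. Your argument is exactly that proof adapted to the rearranged hypotheses (with $h$ assumed bijective, surjectivity of $\varphi$ becomes equivalent to surjectivity of $\lambda$), so it correctly supplies the omitted details.
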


We now prove the following general result on PP.
\begin{thm}\label{th2.1} A polynomial $f(x)\in\mathbb{F}_q[x]$ is a PP if and only if for any maps $\psi_i, \, i=1, \ldots, t, t\in\mathbb{N}$ (we also denote them as $\psi_i(x)\in\mathbb{F}_q[x]/(x^q-x)$) such that
$F(\psi_1(x), \ldots, \psi_t(x))=x$ for some polynomial $F(x_1, \ldots, x_t)\in \mathbb{F}_q[x_1, \ldots, x_t]$, there exists a polynomial $G(x_1, \ldots, x_t)\in \mathbb{F}_q[x_1, \ldots, x_t]$ satisfies $G(\psi_1(f(x)), \ldots, \psi_t(f(x)))=x$. Moreover, if $f(x)$ is a PP, then
$$f^{-1}(x)=G(\psi_1(x), \ldots, \psi_t(x)),$$
where $f^{-1}(x)$ denotes  the compositional inverse of $f(x)$.\end{thm}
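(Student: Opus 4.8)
The plan is to prove the equivalence by handling the two implications separately, and then to read off the inverse formula directly from the construction used in the necessity direction. The observation that drives everything is that the hypothesis $F(\psi_1(x),\ldots,\psi_t(x))=x$ says precisely that the tuple map $a\mapsto(\psi_1(a),\ldots,\psi_t(a))$ is an injective encoding of $\mathbb{F}_q$ which $F$ decodes; so composition with $F$ is essentially the only tool required, and the proof reduces to careful substitution.

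For the sufficiency (that existence of such a $G$ for every admissible tuple forces $f$ to be a PP), I would simply specialize the hypothesis to the trivial tuple $t=1$, $\psi_1(x)=x$, $F(x_1)=x_1$, for which $F(\psi_1(x))=x$ holds automatically. The hypothesis then yields a polynomial $G$ with $G(f(x))=x$ for all $x\in\mathbb{F}_q$, i.e. $G\circ f=\mathrm{id}$ as maps on the finite set $\mathbb{F}_q$. A self-map of a finite set possessing a left inverse is injective (if $f(a)=f(b)$ then $a=G(f(a))=G(f(b))=b$), and an injective self-map of a finite set is a bijection; hence $f$ is a PP. This direction uses only that one legitimate tuple exists, and the trivial one always does.

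For the necessity, I would assume $f$ is a PP and use that its compositional inverse $f^{-1}$ is itself an element of $\mathbb{F}_q[x]/(x^q-x)$, since every map $\mathbb{F}_q\to\mathbb{F}_q$ is represented by a polynomial. Given an arbitrary admissible tuple, I would set $G(x_1,\ldots,x_t)=f^{-1}\bigl(F(x_1,\ldots,x_t)\bigr)$, which is again a polynomial in the $x_i$ because it is obtained by substituting the polynomial $F$ into the univariate polynomial $f^{-1}$. To check it works, I would substitute $y=f(x)$ into the identity $F(\psi_1(y),\ldots,\psi_t(y))=y$ to get $F(\psi_1(f(x)),\ldots,\psi_t(f(x)))=f(x)$, whence $G(\psi_1(f(x)),\ldots,\psi_t(f(x)))=f^{-1}(f(x))=x$. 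The ``moreover'' statement then drops out at once: evaluating the same $G$ at the original tuple gives $G(\psi_1(x),\ldots,\psi_t(x))=f^{-1}(F(\psi_1(x),\ldots,\psi_t(x)))=f^{-1}(x)$.

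Each step is individually short, so the main thing to watch — rather than a genuine obstacle — is the bookkeeping of which variable is substituted where; in particular one must keep straight that $F$ evaluated at the tuple returns the common argument of the $\psi_i$, so it returns $f(x)$ when that inner argument is $f(x)$, not $x$. The only structural facts invoked are that $f^{-1}$ exists as a polynomial and that substitution of polynomials into polynomials yields polynomials, so no deeper machinery (not even Lemma \ref{lem2.1}) is strictly needed, although the sufficiency could alternatively be routed through the local criterion by noting that the tuple map is a bijection onto its image.
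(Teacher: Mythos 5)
Your proposal is correct and takes essentially the same route as the paper: in the necessity direction you construct $G=f^{-1}\circ F$ exactly as the paper does, and in the sufficiency direction you use that a left compositional inverse on the finite set $\mathbb{F}_q$ forces $f$ to be injective, hence bijective. Your explicit choice of the trivial tuple $t=1$, $\psi_1(x)=x$, $F(x_1)=x_1$ is just a sharper statement of the tuple the paper leaves implicit, and your derivation of the ``moreover'' formula from $G=f^{-1}\circ F$ matches the paper's identification $g=f^{-1}$.
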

\begin{proof} We first prove the necessity. Assume that $f(x)$ is a PP, let
$$\varphi_i(x)=\psi_i(f(x)), \, i=1, \ldots, t,$$
then we have
$$\psi_i(x)= \varphi_i (f^{-1}(x)), \, i=1, \ldots, t,$$
where $f^{-1}(x)$ denotes the compositional inverse of $f(x)$.   Since $F(\psi_1(x), \ldots, \psi_t(x))=x$, we get
$$F(\varphi_1(f^{-1}(x)), \ldots, \varphi_t(f^{-1}(x)))=x.$$
It follows that
$$F(\varphi_1(x), \ldots, \varphi_t(x))=f(x)\,\, \mbox{and}\,\, f^{-1}\left(F(\varphi_1(x), \ldots, \varphi_t(x))\right)=x.$$
Let $G(x_1, \ldots, x_t)=f^{-1}\circ F(x_1, \ldots, x_t)$, then we have $G(\varphi_1(x), \ldots, \varphi_t(x))=x$, that is
$$G(\psi_1(f(x)), \ldots, \psi_t(f(x)))=x.$$

Next, we prove the sufficiency. Since for any maps $\psi_i, \, i=1, \ldots, t, t\in\mathbb{N}$ such that
$F(\psi_1(x), \ldots, \psi_t(x))=x$ for some polynomial $F(x_1, \ldots, x_t)\in \mathbb{F}_q[x_1, \ldots, x_t]$, there exists a polynomial $G(x_1, \ldots, x_t)\in \mathbb{F}_q[x_1, \ldots, x_t]$ satisfies $G(\psi_1(f(x)), \ldots, \psi_t(f(x)))=x$. Let
$$g(x)=G(\psi_1(x), \ldots, \psi_t(x)),$$
then $g\circ f(x)=x$, which implies that $g(x)=f^{-1}(x)$, i.e., $f(x)$ is a PP. We are done. \end{proof}

We also have the following alternative result.
\begin{thm}\label{th2.2} Let $q$ be a prime power and $f(x)$ be a polynomial over $\mathbb{F}_q$. Then $f(x)$ is a PP if and only if there exist nonempty finite subsets $S_i, i=1, \ldots, t$  of $\mathbb{F}_q$ and maps $\psi_i: \mathbb{F}_q\to S_i, i=1, \ldots, t$  such that $\psi_i\circ f=\varphi_i, i=1, \ldots, t$ and $x=F(\varphi_1(x), \ldots, \varphi_t(x))$, where $F(x_1, \ldots, x_t)\in\mathbb{F}_q[x_1, \ldots, x_t]$. Moreover, we have
$$ f^{-1}(x)=F(\psi_1(x), \ldots, \psi_t(x)).$$\end{thm}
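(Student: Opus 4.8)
The plan is to prove the two directions separately, treating the ``if'' direction as the substantive one, since it is what produces the explicit inverse formula. For sufficiency I would begin by unwinding the hypotheses: because $\varphi_i=\psi_i\circ f$ and $x=F(\varphi_1(x),\ldots,\varphi_t(x))$, substitution gives $F(\psi_1(f(x)),\ldots,\psi_t(f(x)))=x$ for every $x\in\mathbb{F}_q$. Setting $g(x)=F(\psi_1(x),\ldots,\psi_t(x))$, this is precisely $g(f(x))=x$, i.e.\ $g\circ f=\mathrm{id}$ as self-maps of $\mathbb{F}_q$.

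The key step is then the elementary finite-set fact that a left inverse is automatically two-sided: from $g\circ f=\mathrm{id}$ we see that $f$ is injective, hence bijective since $\mathbb{F}_q$ is finite, so $f$ is a PP; composing $g\circ f=\mathrm{id}$ on the right with $f^{-1}$ yields $g=f^{-1}$. This at once gives that $f$ is a PP and that $f^{-1}(x)=g(x)=F(\psi_1(x),\ldots,\psi_t(x))$, which is the ``moreover'' assertion. In the language of the ring $(\mathbb{F}_q[x]/(x^q-x),+,\circ)$, this is just the remark that a one-sided unit in this finite transformation monoid is a genuine unit.

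For necessity I would simply exhibit admissible data. Assuming $f$ is a PP, take $t=1$, $S_1=\mathbb{F}_q$, and $\psi_1=f^{-1}$; then $\varphi_1=\psi_1\circ f=\mathrm{id}$, and with $F(x_1)=x_1$ one has $F(\varphi_1(x))=x$, as required. (One could equally read necessity off from Theorem~\ref{th2.1} by specializing the maps occurring there.)

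I expect no genuine obstacle: there is nothing to compute, and the only point demanding care is the direction of the finite-set inverse argument — one must check that the hypothesis delivers $g\circ f=\mathrm{id}$ rather than $f\circ g=\mathrm{id}$, so that the conclusion $g=f^{-1}$ (and not merely that $g$ is a right inverse of $f$) is justified. The entire usefulness of the theorem rests in the sufficiency direction, which mechanically converts any representation of $x$ as a polynomial $F$ in the coordinate maps $\psi_i\circ f$ into the compositional inverse $F(\psi_1,\ldots,\psi_t)$.
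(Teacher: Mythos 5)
Your proof is correct and follows essentially the same route as the paper: sufficiency by substituting $\varphi_i=\psi_i\circ f$ into $x=F(\varphi_1(x),\ldots,\varphi_t(x))$ to get a left inverse $g=F(\psi_1,\ldots,\psi_t)$ of $f$, and necessity by taking $t=1$, $S_1=\mathbb{F}_q$, $\psi_1=f^{-1}$, $F(x_1)=x_1$. The only difference is that you spell out the finite-set step (left inverse $\Rightarrow$ injective $\Rightarrow$ bijective $\Rightarrow$ $g=f^{-1}$) which the paper leaves implicit in its ``which implies that'' — a worthwhile clarification, but not a different argument.
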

\begin{proof} If there exist nonempty finite subsets $S_i, i=1, \ldots, t$  of $\mathbb{F}_q$ and maps $\psi_i: \mathbb{F}_q\to S_i, i=1, \ldots, t$  such that $\psi_i\circ f=\varphi_i, i=1, \ldots, t$ and $x=F(\varphi_1(x), \ldots, \varphi_t(x))$, then we the following diagram commutes. $$\xymatrix{
  \mathbb{F}_q \ar[d]_{(\varphi_1, \ldots, \varphi_t)} \ar[r]^{f} & \mathbb{F}_q \ar[dr]^{(\psi_1, \ldots, \psi_t)} \\
  S_1\times\cdots\times S_t  \ar[dr]_{F(x_1, \ldots, x_t)}
 &    & S_1\times\cdots\times S_t \ar[dl]^{F(x_1, \ldots, x_t)} \\
  & \mathbb{F}_q  }                                        $$
 Hence
 $$F( \psi_1(f(x)), \ldots, \psi_t(f(x)))=x,$$
 which implies that
 $$ f^{-1}(x)=F(\psi_1(x), \ldots, \psi_t(x)).$$

 If $f(x)$ is a permutational polynomial over $\mathbb{F}_q$, we can take $S=\mathbb{F}_q$, $i=1$, $\psi=f^{-1}(x)$, $\varphi(x)=x$ and $F(x)=x$, then all conditions are satisfied,  and we have$f^{-1}(x)=\psi(x)$.
  \end{proof}



\section{Some old PPs and their compositional inverses}

In this section,  we give new proofs for some known PPs and their compositional inverses by applying the results in Section 2.  To do this, we need to find out  two polynomials $\psi_1(x)$ and $\psi_2(x)$ such that we can find a polynomial $G(x, y)\in \mathbb{F}_q[x, y]$ satisfying
$$x=G(\psi_1(f(x)), \psi_2(f(x))).$$
Now applying Theorem \ref{th2.2}, we get
$$f^{-1}(x)=G(\psi_1(x), \psi_2(x)).$$

We give two examples below. The following result was discovered independently by several authors, we will give a new proof of Theorem 11 in \cite{NLQW21} by applying Theorem \ref{th2.2}.
\begin{lem}\label{le10} {\rm (\cite[Theorem 2.3]{PL01} \cite[Theorem 1]{W07} \cite[Lemma 2.1]{Z09})}Let $q$ be a prime power and $f(x)=x^rh(x^s)\in\mathbb{F}_q[x]$, where $s=\frac{q-1}{\ell}$ and $\ell$ is an integer. Then $f(x)$ permutes $\mathbb{F}_q$ if and only if

(1) $\gcd(r, \, s)=1$ and

(2) $g(x)=x^rh(x)^s$ permutes $\mu_{\ell}$. \end{lem}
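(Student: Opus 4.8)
The plan is to exploit the multiplicative structure of $f(x)=x^r h(x^s)$ and reduce the permutation property on $\mathbb{F}_q$ to one on the small group $\mu_\ell$ of $\ell$-th roots of unity, exactly in the spirit of the AGW framework of Lemma \ref{lem2.2}. Since $s=(q-1)/\ell$ divides $q-1$, the power map $\lambda(x)=x^s$ sends $\mathbb{F}_q^*$ onto $\mu_\ell$, and its fibres are the cosets of the kernel $\mu_s=\{x\in\mathbb{F}_q^*: x^s=1\}$, each of size $s$. First I would record the key intertwining identity: for $x\in\mathbb{F}_q^*$,
\[
 f(x)^s=x^{rs}h(x^s)^s=(x^s)^r h(x^s)^s=g(x^s),\qquad g(y):=y^r h(y)^s,
\]
so that $\lambda\circ f=g\circ\lambda$ on $\mathbb{F}_q^*$. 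Note that $f(0)=0$; moreover, if $h$ vanished at some $y\in\mu_\ell$, then $f$ would send both $0$ and a nonzero $s$-th root of $y$ to $0$ (so $f$ is not a PP), while simultaneously $g(y)=0\notin\mu_\ell$ (so $g$ does not permute $\mu_\ell$). Hence both sides of the claimed equivalence fail in that case, and I may assume $h$ is nonvanishing on $\mu_\ell$, so that $f$ maps $\mathbb{F}_q^*$ into itself and $g$ maps $\mu_\ell$ into itself; in particular $f$ permutes $\mathbb{F}_q$ if and only if it permutes $\mathbb{F}_q^*$.

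Next I would apply Lemma \ref{lem2.2} with $A=\mathbb{F}_q^*$, $S=\bar S=\mu_\ell$, with $\lambda=\bar\lambda$ the $s$-th power map and with the AGW intermediate map $h_{\mathrm{AGW}}=g$: then $f$ permutes $\mathbb{F}_q^*$ if and only if $g$ permutes $\mu_\ell$ (this is condition (2)) and $f$ is injective on every fibre $\lambda^{-1}(y)$. On a single fibre $\lambda^{-1}(y)$ the value $x^s=y$ is constant, hence $h(x^s)=h(y)$ is a nonzero constant $c$ and $f(x)=c\,x^r$ there; thus injectivity of $f$ on the fibre is equivalent to injectivity of $x\mapsto x^r$ on the coset $x_0\mu_s$. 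Writing two fibre elements as $x_0\zeta_1,x_0\zeta_2$ with $\zeta_i\in\mu_s$, one has $x_0^r\zeta_1^r=x_0^r\zeta_2^r$ iff $(\zeta_1/\zeta_2)^r=1$; since $\mu_s$ is cyclic of order $s$, the map $\zeta\mapsto\zeta^r$ is injective on $\mu_s$ exactly when $\gcd(r,s)=1$. This yields condition (1) and completes both directions of the equivalence.

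For the compositional inverse I would invoke Theorem \ref{th2.2}. Let $\bar g=g^{-1}$ be the inverse permutation of $g$ on $\mu_\ell$, and choose integers $a,b$ with $ar+bs=1$, which is possible precisely because $\gcd(r,s)=1$. Setting $\psi_1(y)=\bar g(y^s)$ and $\psi_2(y)=y\,h(\bar g(y^s))^{-1}$, a direct check gives $\varphi_1(x)=\psi_1(f(x))=x^s$ and $\varphi_2(x)=\psi_2(f(x))=x^r$, and then $x=(x^r)^a(x^s)^b=\varphi_2(x)^a\varphi_1(x)^b$ for all $x\in\mathbb{F}_q^*$. Thus with $F(x_1,x_2)=x_1^b x_2^a$, Theorem \ref{th2.2} yields $f^{-1}(x)=\psi_1(x)^b\psi_2(x)^a$ (together with $f^{-1}(0)=0$). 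The main obstacle is not any single step but the bookkeeping at the boundary: isolating $x=0$, verifying that condition (2) already forces $h$ to be nonvanishing on $\mu_\ell$ so that $f$ stays inside $\mathbb{F}_q^*$ and the diagram is well defined, and interpreting the possibly negative Bézout exponents $a,b$ as genuine maps on $\mathbb{F}_q^*$ (equivalently, reducing them modulo $q-1$). Everything else reduces to the routine fibrewise injectivity computation described above.
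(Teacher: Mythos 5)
Your proof is correct, but there is a structural point you should know: the paper does not prove this lemma at all --- it is quoted as a known result (the citations to Park--Lee, Wang, and Zieve attached to the statement), and is then used as an ingredient in the proposition that follows. So there is no in-paper argument to compare against; what you have written is a genuine, self-contained proof. Your route --- splitting off $0$, reducing to $\mathbb{F}_q^\ast$, and applying the AGW criterion (Lemma \ref{lem2.2}) with $\lambda=\bar{\lambda}$ the $s$-th power map onto $\mu_{\ell}$, with $g$ as the small map, and then checking that injectivity on each coset of $\mu_s$ is exactly the condition $\gcd(r,s)=1$ --- is the standard proof of this result, and it is precisely the kind of application the paper's Section 2 is designed for, so it integrates seamlessly with the paper's framework. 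Your handling of the degenerate case (some $h(y)=0$ with $y\in\mu_{\ell}$, which makes both sides of the equivalence fail simultaneously) is a detail that published statements often gloss over, and it is genuinely needed here: without it $f$ need not map $\mathbb{F}_q^\ast$ into itself and the AGW lemma would not apply. Finally, note that your last paragraph (the inverse formula) is not part of the lemma being proved; it is the content of the proposition that follows in the paper, and your derivation via Theorem \ref{th2.2} essentially coincides with the paper's proof of that proposition, up to folding the division by $h(g^{-1}(x^s))$ into $\psi_2$ and swapping the roles of $a$ and $b$ in the B\'ezout relation (the paper takes $as+br=1$ where you take $ar+bs=1$); after matching conventions the two formulas for $f^{-1}$ agree.
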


\begin{prop}{\rm (\cite[Theorem 11]{NLQW21})} Let $f(x)=x^rh(x^s)\in\mathbb{F}_q[x]$ defined in Lemma \ref{le10} be a permutation over $\mathbb{F}_q$ and $g^{-1}(x)$ be the compositional inverse of $g(x)=x^rh(x)^s$ over $\mu_{\ell}$. Suppose $a$ and $b$ are two integers satisfying $as+br=1$. Then the compositional inverse of $f(x)$ in $\mathbb{F}_q[x]$ is given by
$$f^{-1}(x)=g^{-1}(x^s)^ax^bh(g^{-1}(x^s))^{-b}.$$
\end{prop}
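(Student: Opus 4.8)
The plan is to apply Theorem \ref{th2.2} with $t=2$, choosing the two maps $\psi_1,\psi_2$ so that the associated coordinates $\varphi_i=\psi_i\circ f$ become the two natural quantities $x^s$ and $f(x)$ from which $x$ can be reconstructed. Concretely, I would take $\psi_2(x)=x$, so that $\varphi_2(x)=f(x)$, and $\psi_1(x)=g^{-1}(x^s)$, where $g^{-1}$ is the compositional inverse of $g$ on $\mu_\ell$ (extended by $g^{-1}(0)=0$). The first step is the key identity $f(x)^s=g(x^s)$: raising $f(x)=x^rh(x^s)$ to the $s$-th power gives $f(x)^s=x^{rs}h(x^s)^s=(x^s)^rh(x^s)^s=g(x^s)$, where one uses that $y=x^s\in\mu_\ell$ for $x\in\mathbb{F}_q^*$. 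Since $g$ permutes $\mu_\ell$ by condition (2) of Lemma \ref{le10}, this yields $\varphi_1(x)=\psi_1(f(x))=g^{-1}(f(x)^s)=g^{-1}(g(x^s))=x^s$.

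Next I would exhibit the polynomial $G$ that recovers $x$ from $(\varphi_1(x),\varphi_2(x))=(x^s,f(x))$. Using $\gcd(r,s)=1$ (condition (1) of Lemma \ref{le10}), pick integers $a,b$ with $as+br=1$, and set $G(x_1,x_2)=x_1^a x_2^b h(x_1)^{-b}$. A direct exponent computation then gives
$$G(\varphi_1(x),\varphi_2(x))=(x^s)^a f(x)^b h(x^s)^{-b}=x^{as}\cdot x^{rb}h(x^s)^b\cdot h(x^s)^{-b}=x^{as+br}=x.$$
Feeding this into Theorem \ref{th2.2} immediately produces $f^{-1}(x)=G(\psi_1(x),\psi_2(x))=g^{-1}(x^s)^a x^b h(g^{-1}(x^s))^{-b}$, which is exactly the claimed formula.

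The step requiring the most care, and the main obstacle, is the meaning of the negative exponents together with the factor $h(\,\cdot\,)^{-b}$ and the behaviour at $x=0$. For $x\in\mathbb{F}_q^*$ one has $x^s\in\mu_\ell$, and since $f$ is a permutation with $f(0)=0$, necessarily $f(x)\neq 0$ for $x\neq 0$, which forces $h(\zeta)\neq 0$ for every $\zeta\in\mu_\ell$; hence all the inverted quantities are genuine units and $G$ is a well-defined map on the finite sets involved. The negative powers may be rewritten as positive ones modulo $\ell$ on $\mu_\ell$ and modulo $q-1$ on $\mathbb{F}_q^*$, so that $G$ represents an honest element of $\mathbb{F}_q[x_1,x_2]$ as Theorem \ref{th2.2} requires. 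Finally the value $x=0$ must be checked separately: there $f(0)=0$ and $\psi_1(0)=g^{-1}(0)=0$, and one verifies that the (positive-exponent representation of the) formula returns $0$, so that $f^{-1}(0)=0$ holds consistently and the identity $G(f(x))=x$ is valid on all of $\mathbb{F}_q$.
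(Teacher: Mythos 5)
Your proposal is correct and follows essentially the same route as the paper: the same choice $\psi_1(x)=g^{-1}(x^s)$, $\psi_2(x)=x$ (giving $\varphi_1(x)=x^s$, $\varphi_2(x)=f(x)$), the same recovery identity $\varphi_1(x)^a\bigl(\varphi_2(x)/h(\varphi_1(x))\bigr)^b=x$ via $as+br=1$, and the same appeal to Theorem \ref{th2.2}. Your extra checks (the identity $f(x)^s=g(x^s)$, nonvanishing of $h$ on $\mu_\ell$, and the value at $x=0$) are details the paper leaves implicit, not a different method.
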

\begin{proof} By the assumptions, we have the following commutative diagrams$$ \xymatrix{
  \mathbb{F}_q \ar[rr]^{f(x)} \ar[dr]_{\varphi_i}
                &  &    \mathbb{F}_q \ar[dl]^{\psi_i}    \\
                & \mu_{\ell}                 }\quad i=1, 2,
$$
where $$\psi_1(x)=g^{-1}(x^s),  \varphi_1(x)=x^s, \quad \psi_2(x)=x, \quad \varphi_2(x)=f(x)=x^rh(x^s).$$
We have
$$\varphi_1(x)^a\cdot\left(\frac{\varphi_2(x)}{h(\varphi_1(x))}\right)^b=x,$$
it follows from Theorem \ref{th2.2} that
$$f^{-1}(x)=\psi_1(x)^a\cdot\left(\frac{\psi_2(x)}{h(\psi_1(x))}\right)^b=\left(g^{-1}(x^s)\right)^ax^b\left(h(g^{-1}(x^s))\right)^{-b}.$$ This completes the proof.\end{proof}

Next, we give another proof of the following result.

\begin{prop}\label{Thm1}{\rm \cite[Theorem 3.1]{Y22}} Let $q$ be a prime power, and $S, \bar{S}$  subsets of $\mathbb{F}_q^\ast$ with $\sharp S=\sharp\bar{S}$.
 Let  $f:\mathbb{F}_q^\ast\rightarrow \mathbb{F}_q^\ast, g:S\rightarrow\bar{S}$, $\lambda: \mathbb{F}_q^\ast\rightarrow S$, and $\bar{\lambda}:\mathbb{F}_q^\ast\rightarrow \bar{S}$ be maps such that both $\lambda$ and $\bar{\lambda}$ are surjective maps and $\bar{\lambda}\circ f=g\circ\lambda$.

Let $f_1(x)$ and  $f(x)=f_1(x)h(\lambda(x))$  are PPs over $\mathbb{F}_q^\ast$, and let $f_1^{-1}(x), f^{-1}(x)$ and $g^{-1}(x)$ be the compositional inverses of $f_1(x), f(x)$ and $g(x)$, respectively. Then we have
$$f^{-1}(x)=f_1^{-1}\left(\frac{x}{h(g^{-1}(\bar{\lambda}(x)))}\right).$$\end{prop}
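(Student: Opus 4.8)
The plan is to apply Theorem \ref{th2.2} with $t=2$, choosing the auxiliary maps so that the hypothesis $\bar\lambda\circ f=g\circ\lambda$ carries the whole argument. Concretely, I would set $\psi_1=\bar\lambda$ (with codomain $S_1=\bar S$) and $\psi_2=\mathrm{id}$, the identity map on $\mathbb{F}_q^\ast$ (with $S_2=\mathbb{F}_q^\ast$). Although Theorem \ref{th2.2} is stated over $\mathbb{F}_q$, its proof rests only on the local criterion Lemma \ref{lem2.1}, which holds for arbitrary finite sets; hence the same statement is valid with $\mathbb{F}_q$ replaced by $\mathbb{F}_q^\ast$ throughout, and I will use it in that form.

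With this choice I first compute the companion maps $\varphi_i=\psi_i\circ f$. For $i=2$ this is just $\varphi_2=f$. For $i=1$, the hypothesis $\bar\lambda\circ f=g\circ\lambda$ gives $\varphi_1=\bar\lambda\circ f=g\circ\lambda$. Since $f$ is a PP and $\lambda,\bar\lambda$ are surjective, the AGW criterion (Lemma \ref{lem2.2}) forces $g$ to be a bijection from $S$ to $\bar S$, so $g^{-1}$ is well defined and $g^{-1}(\varphi_1(x))=g^{-1}(g(\lambda(x)))=\lambda(x)$.

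The key algebraic step is then to recover $x$ from $\varphi_1(x)$ and $\varphi_2(x)$. From $f(x)=f_1(x)h(\lambda(x))$ together with $\lambda(x)=g^{-1}(\varphi_1(x))$ I get $f_1(x)=\varphi_2(x)/h(g^{-1}(\varphi_1(x)))$; the division is legitimate because $f(x),f_1(x)\in\mathbb{F}_q^\ast$ force $h(\lambda(x))=f(x)/f_1(x)\neq 0$. Applying $f_1^{-1}$ yields $x=f_1^{-1}\bigl(\varphi_2(x)/h(g^{-1}(\varphi_1(x)))\bigr)=F(\varphi_1(x),\varphi_2(x))$ with $F(x_1,x_2)=f_1^{-1}\bigl(x_2/h(g^{-1}(x_1))\bigr)$. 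Theorem \ref{th2.2} then gives $f^{-1}(x)=F(\psi_1(x),\psi_2(x))=f_1^{-1}\bigl(x/h(g^{-1}(\bar\lambda(x)))\bigr)$, which is exactly the claimed identity.

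Since the whole argument is a substitution, there is no genuinely hard step; the only points needing care are (a) confirming that $g^{-1}$ exists and that $g^{-1}\circ\bar\lambda\circ f=\lambda$, which is precisely where the commutativity hypothesis and the bijectivity of $g$ enter, and (b) checking that every quotient stays inside $\mathbb{F}_q^\ast$ so that the compositions with $f_1^{-1}$ and $h$ are meaningful. Alternatively, one can bypass Theorem \ref{th2.2} and verify the formula by direct substitution: put $y=f(x)$, use $\bar\lambda(y)=g(\lambda(x))$ to simplify $h(g^{-1}(\bar\lambda(y)))=h(\lambda(x))$, and observe that the right-hand side then collapses to $f_1^{-1}(f_1(x))=x$; because $f$ is a bijection, this identity $f^{-1}\circ f=\mathrm{id}$ already determines $f^{-1}$.
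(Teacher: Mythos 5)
Your proof is correct and follows essentially the same route as the paper: both apply Theorem \ref{th2.2} with $t=2$, take $\psi_2=\mathrm{id}$, $\varphi_2=f$, and recover $x$ via the identity $x=f_1^{-1}\bigl(f(x)/h(\lambda(x))\bigr)$. The only (cosmetic) difference is that you place $g^{-1}$ inside the polynomial $F$ with $\psi_1=\bar\lambda$, whereas the paper absorbs it into $\psi_1=g^{-1}\circ\bar\lambda$ and $\varphi_1=\lambda$; your added care about working over $\mathbb{F}_q^\ast$ and the nonvanishing of $h(\lambda(x))$ is a welcome tightening of details the paper leaves implicit.
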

\begin{proof}By the assumptions of the theorem, we have $$\xymatrix{
  \mathbb{F}_q \ar[rr]^{f} \ar[dr]_{\varphi_i}
                &  &    \mathbb{F}_q \ar[dl]^{\psi_i}    \\
                & S                 }\quad i=1, 2,$$
  where
                $$\psi_1(x)=g^{-1}(\bar{\lambda}),  \varphi_1(x)=\lambda, \quad \psi_2(x)=x, \varphi_2(x)=f(x).$$
It is easy to check that
                $$x=f_1^{-1}\left(\frac{\varphi_2(x)}{h(\varphi_1(x))}\right),$$
by Theorem \ref{th2.2}, we have$$ \quad f^{-1}(x)=f_1^{-1}\left(\frac{x}{h(g^{-1}(\bar{\lambda}(x)))}\right).$$And we are done.
\end{proof}

{\bf Remark:} We can also  obtain the compositional inverses of many known PPs by the method above. We stop to do this here.

\section{Some new PPs and their compositional inverses}

For more applications of the results in Section 2, we will present some new PPs and their compositional inverses in this section.

Let $d>1$ be a positive integer, $q$ a prime power with $q\equiv1\pmod{d}$ and $\omega$ a  $d$-th primitive root of unity over $\mathbb{F}_q$. For $0\le i\le d-1$, let
$$A_i(x)=x^{q^{d-1}}+\omega^ix^{q^{d-2}}+\cdots+\omega^{i(d-1)}x.$$
Most results in the following two lemmas are proved in \cite{Yuan22}, for completeness and self-contained, we present the full proof. We have
\begin{lem}\label{lem4.1} Let the notations be as above  and let $g(x)\in \mathbb{F}_q[x]$ be a polynomial, then we have

(i) $A_i^q(x)=\omega^iA_i(x), 0\le i\le d-1$.

(ii) For any positive integer $m$ and integers $i, j$ with $0\le i, j\le d-1$, we have
$$A_j(x)\circ A_i^m(x)=\left\{ \begin{array}{ll}
                     d\omega^{-j}A_i^m(x), & \mbox{if } j\equiv im\pmod{d}, \\
                     0, & \mbox{ otherwise.}
                    \end{array}
         \right.$$
(iii) For any positive integer $j$ with $1\le j\le d-1$, we have
$$A_j(x)\circ g(A_0(x))=0.$$
         \end{lem}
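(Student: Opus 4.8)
The plan is to work throughout in $\mathbb{F}_{q^d}$, regarding each $A_i(x)=\sum_{k=0}^{d-1}\omega^{ik}x^{q^{d-1-k}}$ as an additive ($q$-linearized) map reduced modulo $x^{q^d}-x$, and to lean on three elementary facts: that $q\equiv 1\pmod d$ means $\omega\in\mathbb{F}_q$ and hence $\omega^q=\omega$; that the Frobenius $y\mapsto y^q$ is a ring homomorphism; and the orthogonality relation $\sum_{k=0}^{d-1}\omega^{ck}=d$ when $d\mid c$ and $0$ otherwise, which holds because $\omega$ is a primitive $d$-th root of unity. With these in hand, (i) is the foundation, (ii) is (i) packaged as an eigenvalue statement plus orthogonality, and (iii) is the degenerate case of the same computation.

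For (i), I would apply Frobenius termwise. Since $\omega^q=\omega$ and $(x^{q^{d-1-k}})^q=x^{q^{d-k}}$, this gives $A_i(x)^q=\sum_{k=0}^{d-1}\omega^{ik}x^{q^{d-k}}$; the top term ($k=0$) is $x^{q^d}=x$ after reduction modulo $x^{q^d}-x$, and the index shift $k\mapsto k+1$ identifies the whole sum with $\omega^i\sum_{j=0}^{d-1}\omega^{ij}x^{q^{d-1-j}}=\omega^iA_i(x)$. The one delicate point is the wrap-around of the leading term, which is precisely where the reduction modulo $x^{q^d}-x$ enters.

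For (ii), the key step is to convert (i) into a Frobenius-eigenvalue property: writing $u=A_i^m(x)$, part (i) yields $u^q=(A_i(x)^q)^m=\omega^{im}u$, and inductively $u^{q^\ell}=\omega^{im\ell}u$ for all $\ell\ge 0$. Substituting into $A_j(u)=\sum_{k=0}^{d-1}\omega^{jk}u^{q^{d-1-k}}$ and factoring out $u$ collapses everything to $A_j(u)=u\,\omega^{im(d-1)}\sum_{k=0}^{d-1}\omega^{(j-im)k}$. Orthogonality now finishes the job: when $j\equiv im\pmod d$ the sum is $d$ and $\omega^{im(d-1)}=\omega^{-im}=\omega^{-j}$, giving $d\omega^{-j}A_i^m(x)$, while otherwise the character sum vanishes. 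Part (iii) is then a special case of the same mechanism: by (i) with $i=0$ we get $A_0(x)^q=A_0(x)$, so $A_0(x)\in\mathbb{F}_q$, and since $g\in\mathbb{F}_q[x]$ the value $v=g(A_0(x))$ lies in $\mathbb{F}_q$ and is fixed by Frobenius; hence $A_j(v)=v\sum_{k=0}^{d-1}\omega^{jk}=0$ for $1\le j\le d-1$ because $\omega^j\ne 1$.

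I expect no serious obstacle: all three parts are routine once the eigenvalue viewpoint is adopted. The only genuinely load-bearing ingredient is the character-sum orthogonality, and the most error-prone step is the index bookkeeping in (i)—getting the wrap-around term $x^{q^d}=x$ and the subsequent reindexing right is exactly what makes the clean factorizations in (ii) and (iii) fall out.
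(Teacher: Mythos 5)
Your proof is correct, and for parts (i) and (ii) it is essentially identical to the paper's: Frobenius applied termwise with the wrap-around $x^{q^d}=x$, then the eigenvalue relation $\left(A_i^m(x)\right)^{q^t}=\omega^{imt}A_i^m(x)$ fed into $A_j$ and collapsed by the geometric-sum orthogonality, including the same factor $\omega^{im(d-1)}=\omega^{-j}$. The only divergence is in (iii): the paper expands $g(x)=b_0+b_1x+\cdots+b_tx^t$, uses the $\mathbb{F}_q$-linearity of the $q$-polynomial $A_j$, kills each term $A_j(x)\circ A_0^i(x)$ by part (ii) (since $j\not\equiv 0\pmod d$), and finishes with $A_j(b_0)=b_0(1+\omega^j+\cdots+\omega^{j(d-1)})=0$; you instead argue at the level of values, noting that $A_0(x)^q=A_0(x)$ forces $g(A_0(x))\in\mathbb{F}_q$ pointwise, so $A_j$ applied to it is a Frobenius-fixed element times the vanishing sum $\sum_{k=0}^{d-1}\omega^{jk}$. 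Your route is slightly more direct and avoids invoking (ii) and linearity, at the cost of working with maps modulo $x^{q^d}-x$ rather than with polynomial coefficients; since the paper explicitly identifies polynomials with elements of the composition ring modulo $x^{q^d}-x$, this is a legitimate equivalence and not a gap.
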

         \begin{proof} For (i), we have
$$\left(A_i(x)\right)^q=x+\omega^ix^{q^{d-1}}+\cdots+\omega^{i(d-1)}x^q=\omega^iA_i(x).$$
This proves (i). Now we prove (ii), repeating the procedure in (i), we get
$$\left(A_i(x)\right)^{q^t}=\omega^{it}A_i(x).$$
Hence
$$\left(A_i^m(x)\right)^{q^t}=\left(A_i(x)^{q^t}\right)^m=\omega^{imt}A_i^m(x).$$
Therefore
$$ A_j(x)\circ A_i^m(x)=A_i^m(x)\sum_{t=0}^{d-1}\omega^{(jt+i(d-1-t)m)}=A_i^m(x)\omega^{im(d-1)}\sum_{t=0}^{d-1}\omega^{(j-im)t}$$
$$=\left\{ \begin{array}{ll}
                     d\omega^{-j}A_i^m(x), & \mbox{if } j\equiv im\pmod{d}, \\
                     0, & \mbox{ otherwise.}
                    \end{array}
         \right.$$
(iii) Let $g(x)=b_0+b_1x+\cdots+b_tx^t, b_i\in\mathbb{F}_{q}, 0\le i\le t$, since $A_j(x)$ is a $q$-polynomial and $b_i\in\mathbb{F}_{q}, 0\le i\le t$, by (ii), we have
$$A_j(x)\circ g(A_0(x))=A_j(b_0)+\sum_{i=1}^tA_j(x)\circ A_0^i(x)$$
$$=A_j(b_0)=b_0(1+\omega^j+\cdots+\omega^{j(d-1)})=0.$$\end{proof}

\begin{lem}\label{lem4.2} Let the notations be as in Lemma \ref{lem4.1}, and let
$$B_i=\{A_i(x), \, x\in\mathbb{F}_{q^d}\},\,\, 0\le i\le d-1.$$
Then $B_i=\{0\}\cup y_i\mathbb{F}_q^\ast$, where $y_i$ is a fixed nonzero element of $B_i$, $1\le i\le d-1$ and $B_0=\mathbb{F}_q$.\end{lem}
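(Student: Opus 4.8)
The plan is to regard each $A_i(x)$ as the $\mathbb{F}_q$-linear operator $x\mapsto A_i(x)$ on the $d$-dimensional space $\mathbb{F}_{q^d}$ (it is a $q$-linearized polynomial), so that $B_i=\operatorname{im}(A_i)$ is an $\mathbb{F}_q$-subspace; the whole statement then reduces to computing the dimensions of these images. Since $q\equiv 1\pmod d$ we have $d\mid q-1$, hence the primitive $d$-th root of unity $\omega$ lies in $\mathbb{F}_q$ and $\omega^0,\dots,\omega^{d-1}$ are $d$ distinct elements of $\mathbb{F}_q$. The key input is part (i) of Lemma~\ref{lem4.1}: every $y=A_i(x)\in B_i$ satisfies $y^q=\omega^i y$. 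Writing $\phi$ for the Frobenius $y\mapsto y^q$ and $V_i=\{y\in\mathbb{F}_{q^d}:\phi(y)=\omega^i y\}$ for its $\omega^i$-eigenspace (an $\mathbb{F}_q$-subspace because $\omega^i\in\mathbb{F}_q$), this says $B_i\subseteq V_i$.

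Next I would run a global dimension count, which is the heart of the argument. On the one hand, each $A_i$ is a nonzero polynomial (it is monic of degree $q^{d-1}$), so as a linearized polynomial its kernel on $\mathbb{F}_{q^d}$ has $\mathbb{F}_q$-dimension at most $d-1$; hence $B_i\neq\{0\}$ and therefore $\dim_{\mathbb{F}_q}V_i\ge 1$. On the other hand, the $\omega^i$ are pairwise distinct, so the eigenspaces $V_0,\dots,V_{d-1}$ have direct sum inside $\mathbb{F}_{q^d}$; thus $\sum_{i=0}^{d-1}\dim_{\mathbb{F}_q}V_i\le\dim_{\mathbb{F}_q}\mathbb{F}_{q^d}=d$. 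Combining the two, $d$ summands each of dimension at least $1$ with total at most $d$ forces $\dim_{\mathbb{F}_q}V_i=1$ for every $i$ and, since $B_i\subseteq V_i$ is nonzero, $B_i=V_i$.

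Finally I would read off the two cases. For $1\le i\le d-1$, $B_i$ is a one-dimensional $\mathbb{F}_q$-line, so fixing any nonzero $y_i\in B_i$ gives $B_i=y_i\mathbb{F}_q=\{0\}\cup y_i\mathbb{F}_q^\ast$. For $i=0$ one has $A_0(x)=x^{q^{d-1}}+\cdots+x=\operatorname{Tr}_{\mathbb{F}_{q^d}/\mathbb{F}_q}(x)$, whose image is the fixed field $V_0=\{y:y^q=y\}=\mathbb{F}_q$, so $B_0=\mathbb{F}_q$. The only genuinely delicate point is the dimension count: one must simultaneously know that no image collapses ($B_i\neq\{0\}$) and that the eigenspaces cannot overlap, and it is the interplay of these two facts that pins every $B_i$ down to a single line. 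An alternative, staying entirely inside the composition algebra, is to use part (ii) of Lemma~\ref{lem4.1} with $m=1$: it shows $e_i:=\tfrac{\omega^i}{d}A_i$ satisfies $e_i\circ e_i=e_i$ and $e_j\circ e_i=0$ for $j\neq i$, and a short computation gives $\sum_{i}\omega^iA_i(x)=dx$, i.e.\ $\sum_i e_i=\mathrm{id}$; the $e_i$ are then a complete set of orthogonal idempotents and the same dimension count applies.
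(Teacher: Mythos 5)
Your proof is correct, but it takes a genuinely different route from the paper's. The paper's argument is local and takes two lines: given two nonzero elements $y_1,y_2\in B_i$, Lemma \ref{lem4.1}(i) gives $y_j^q=\omega^i y_j$ for $j=1,2$, hence $(y_1/y_2)^q=y_1/y_2$, i.e. $y_1/y_2\in\mathbb{F}_q^\ast$; thus all nonzero elements of $B_i$ lie in a single coset $y_i\mathbb{F}_q^\ast$, and the reverse inclusion $y_i\mathbb{F}_q^\ast\subseteq B_i$ follows from the $\mathbb{F}_q$-linearity of $A_i$ (namely $cA_i(x)=A_i(cx)$ for $c\in\mathbb{F}_q$). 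You instead identify each $B_i$ with the full $\omega^i$-eigenspace $V_i$ of the Frobenius by a global dimension count: $B_i\subseteq V_i$ by Lemma \ref{lem4.1}(i), $B_i\ne\{0\}$ because $\ker A_i$ consists of the at most $q^{d-1}$ roots of a nonzero polynomial, and the eigenspaces $V_0,\dots,V_{d-1}$ are independent inside the $d$-dimensional space $\mathbb{F}_{q^d}$, forcing $\dim_{\mathbb{F}_q}B_i=\dim_{\mathbb{F}_q}V_i=1$ for every $i$. Both proofs hinge on the same key input, part (i) of Lemma \ref{lem4.1}, but yours buys more: it makes explicit a point the paper glosses over (that $B_i\ne\{0\}$, which is needed for $y_i$ to exist at all), it shows $B_i$ is exactly the eigenspace $V_i$ rather than merely a line inside it, and it exhibits the direct-sum decomposition $\mathbb{F}_{q^d}=\bigoplus_{i=0}^{d-1}B_i$, which is essentially the identity $x=\frac{1}{d}\sum_{i=0}^{d-1}\omega^iA_i(x)$ exploited later in the proof of Theorem \ref{th4.1}. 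The paper's proof, by contrast, is shorter and entirely elementary, at the cost of leaving the nonvanishing of $B_i$ and the inclusion $y_i\mathbb{F}_q^\ast\subseteq B_i$ implicit; your closing remark via the orthogonal idempotents $e_i=\frac{\omega^i}{d}A_i$ is also sound, though, as you note, it still needs the degree argument to rule out $B_i=\{0\}$.
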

\begin{proof} Obviously, $0\in B_i$ and $B_0=\mathbb{F}_q$. For any elements $y_1, y_2\in B_i$ with $y_1y_2\ne0$, by Lemma \ref{lem4.1} (i), we have
$$y_j^q=\omega^i y_j, \,\, j=1, 2.$$
Hence $(y_1/y_2)^q=y_1/y_2$, which implies that $y_1/y_2\in\mathbb{F}_q^\ast$, and the assertion follows. \end{proof}

\begin{thm}\label{th4.1}Let $d>1$ be a positive integer, $q$ a prime power with $q\equiv1\pmod{d}$ and $\omega$ a  $d$-th primitive root of unity over $\mathbb{F}_q$. Let
$$A_i(x)=x^{q^{d-1}}+\omega^ix^{q^{d-2}}+\cdots+\omega^{i(d-1)}x, \,\, 0\le i\le d-1.$$ Let $m_1, \ldots, m_{d-1}$ be positive integers  and $u_1, \ldots, u_{d-1}\in\mathbb{F}_q$, and let $g(x)\in \mathbb{F}_q[x]$ be a polynomial. Then the polynomial
$$f(x)=g(A_0(x))+\sum_{t=1}^{d-1}u_iA_i^{m_i}(x),$$
 is a PP over $\mathbb{F}_{q^d}$ if and only if $\{0\}\cup\{im_i, 1\le i\le d-1\}$ is a complete residue modulo $d$, $u_1\cdot\cdots\cdot u_{d-1}\in\mathbb{F}_q^\ast$, $\gcd(m_1\cdot\cdots\cdot m_{d-1}, q-1)=1$ and $g(x)$ is a PP over $\mathbb{F}_q$. Furthermore, if $f(x)$ is a PP over $\mathbb{F}_{q^d}$,  $r_i$ are positive integers with $m_ir_i\equiv1\pmod{d(q-1)}(1\le i\le d-1)$ and $g^{-1}(x)$ is the compositional inverse of $g(x)$, then
$$f^{-1}(x)=\frac{1}{d}\left(g^{-1}(A_0(x)/d)+\sum_{i=1, j\equiv im_i\pmod{d}}^{d-1}\omega^i(du_i\omega^{-j})^{-r_i}A_j(x)^{r_i}\right).$$\end{thm}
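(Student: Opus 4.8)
The plan is to work in the $\mathbb{F}_q$-linear ``coordinate system'' given by $A_0,\dots,A_{d-1}$. First I would record the inversion identity
$x=\frac1d\sum_{k=0}^{d-1}\omega^{k}A_k(x)$, obtained by forming the $\omega$-weighted combination $\sum_i\omega^{-i(d-1)}A_i(x)$ and collapsing it with $\sum_{t=0}^{d-1}\omega^{(k-l)t}=d$ when $k\equiv l\pmod d$ and $0$ otherwise. This writes $x$ as a polynomial $F(x_0,\dots,x_{d-1})=\frac1d\sum_k\omega^k x_k$ evaluated at $A_0(x),\dots,A_{d-1}(x)$, so the choice $\psi_k=A_k$ together with $F$ meets the hypothesis $F(\psi_0(x),\dots,\psi_{d-1}(x))=x$ of Theorem \ref{th2.1}. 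By Lemma \ref{lem4.2} the map $\Phi=(A_0,\dots,A_{d-1})$ lands in $B_0\times\cdots\times B_{d-1}$, a product of $d$ sets each of size $q$; since $\Phi$ is $\mathbb{F}_q$-linear and injective (by the identity) between sets of equal cardinality $q^d$, it is a bijection onto this product. Hence studying $f$ is equivalent to studying the induced map $\tilde f$ on $\prod_k B_k$.

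Next I would compute $A_j\circ f$ using Lemma \ref{lem4.1}. As each $A_j$ is a $q$-polynomial with coefficients in $\mathbb{F}_q$ and the $u_i\in\mathbb{F}_q$, we have $A_j\circ f=A_j\circ g(A_0)+\sum_i u_i\,A_j\circ A_i^{m_i}$. Lemma \ref{lem4.1}(iii) annihilates $A_j\circ g(A_0)$ for $j\ge 1$; Lemma \ref{lem4.1}(ii) gives $A_j\circ A_i^{m_i}=d\omega^{-j}A_i^{m_i}$ exactly when $im_i\equiv j\pmod d$ and $0$ otherwise; and for $j=0$ one uses $g(A_0(x))\in\mathbb{F}_q$ so that $A_0(g(A_0(x)))=d\,g(A_0(x))$. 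Thus $A_0(f(x))=d\,g(A_0(x))+\sum_{i:\,im_i\equiv 0}d u_i A_i^{m_i}(x)$ and, for $1\le j\le d-1$, $A_j(f(x))=\sum_{i:\,im_i\equiv j}d u_i\omega^{-j}A_i^{m_i}(x)$. This exhibits $\tilde f$ in block form; when $\{0\}\cup\{im_i\}$ is a complete residue system the blocks decouple, sending the $i$-th coordinate into coordinate $im_i\bmod d$ by $a\mapsto d u_i\omega^{-j}a^{m_i}$ and the $0$-coordinate by $a\mapsto d\,g(a)$.

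For the equivalence I would analyze when $\tilde f$ is a bijection of $\prod_k B_k$. The index map $i\mapsto im_i\bmod d$ (with $0\mapsto 0$) is a permutation of $\mathbb{Z}/d$ precisely when $\{0\}\cup\{im_i\}$ is a complete residue system; this is exactly what decouples $\tilde f$, and it is necessary, since if it fails then (by counting the $d-1$ images in $\{0,\dots,d-1\}$) some nonzero residue $\ell$ is missed, forcing the output coordinate $b_\ell\equiv 0$ and destroying surjectivity. Granting it, $\tilde f$ is bijective iff each block is: $a\mapsto d\,g(a)$ permutes $B_0=\mathbb{F}_q$ iff $g$ is a PP (note $d\ne 0$ in $\mathbb{F}_q$ as $d\mid q-1$), while $a\mapsto d u_i\omega^{-j}a^{m_i}$ carries $B_i$ onto $B_{im_i\bmod d}$ (from $y_i^q=\omega^i y_i$) and is bijective iff $u_i\ne 0$ and $c\mapsto c^{m_i}$ is injective on $\mathbb{F}_q^\ast$, i.e. $\gcd(m_i,q-1)=1$. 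These are the four stated conditions, phrased as $u_1\cdots u_{d-1}\in\mathbb{F}_q^\ast$ and $\gcd(m_1\cdots m_{d-1},q-1)=1$, giving both directions.

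Finally, assuming the conditions, I would invert $\tilde f$ coordinatewise and push the result through $F$. From $A_0(f(x))=d\,g(A_0(x))$ we get $A_0(x)=g^{-1}(A_0(f(x))/d)$; from $A_j(f(x))=d u_i\omega^{-j}A_i^{m_i}(x)$ with $j=im_i\bmod d$ we recover $A_i(x)=(d u_i\omega^{-j})^{-r_i}A_j(f(x))^{r_i}$, where $r_i$ is chosen so that the $r_i$-th power undoes the $m_i$-th power on $B_i$. Substituting into $x=\frac1d\sum_k\omega^k A_k(x)$ yields a polynomial $G$ with $G(A_0(f(x)),\dots,A_{d-1}(f(x)))=x$, whence Theorem \ref{th2.1} delivers simultaneously that $f$ is a PP and that $f^{-1}(x)=G(A_0(x),\dots,A_{d-1}(x))$, which is the displayed formula. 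I expect the delicate point to be this recovery: for $z\in B_i\setminus\{0\}$ one has $z^{q-1}=\omega^i$, so $z$ has order dividing $d(q-1)$, and the single identity $z^{m_ir_i}=z$ for all such $z$ requires $m_ir_i\equiv 1$ both modulo $q-1$ (to fix the $\mathbb{F}_q^\ast$-part) and modulo $d$ (to unwind the root-of-unity twist), i.e. $m_ir_i\equiv 1\pmod{d(q-1)}$. Confirming that an exponent $r_i$ with this property can be selected and that it inverts the map uniformly over all of $B_i$ is the crux of the computation.
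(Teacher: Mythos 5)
Your proposal is correct, and its computational core coincides with the paper's: Lemma \ref{lem4.1} to compute $A_j\circ f$, Lemma \ref{lem4.2} for the structure of the sets $B_i$, the block maps $a\mapsto du_i\omega^{-j}a^{m_i}$, the inversion identity $x=\frac{1}{d}\sum_{k}\omega^kA_k(x)$, the exponents $r_i$ with $m_ir_i\equiv 1\pmod{d(q-1)}$, and the concluding appeal to Theorem \ref{th2.1} (the paper invokes the equivalent Theorem \ref{th2.2}). Where you genuinely differ is in how the permutation criterion itself is established. The paper argues diagram by diagram: necessity of the complete-residue condition comes from the annihilation $A_j\circ f=0$ for a missed residue $j$ (same as yours); the case $u_i=0$ is reduced to that case; necessity of ``$g$ is a PP'' is extracted from the relation $A_0\circ f=d\,g(A_0(x))$ via the local criterion, Lemma \ref{lem2.1}; and sufficiency is absorbed, without a separate argument, into the application of Theorem \ref{th2.2} once the maps $\psi_i$ are written down. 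You instead prove that $\Phi=(A_0,\dots,A_{d-1})$ is a bijection of $\mathbb{F}_{q^d}$ onto $B_0\times\cdots\times B_{d-1}$ (injective by the inversion identity, bijective by the count $|B_i|=q$), conjugate $f$ into a coordinate-permuted product of block maps, and read off ``$f$ is a PP iff every block is bijective'' as pure bookkeeping, with no use of Lemma \ref{lem2.1} or \ref{lem2.2}. This buys a self-contained and uniform treatment of all four conditions (in particular $u_i\ne 0$ and $\gcd(m_i,q-1)=1$ fall out of block bijectivity rather than being handled by the paper's separate remarks), at the small price of verifying that $\Phi$ is onto the product, which your cardinality argument does correctly. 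The point you flag as the crux resolves exactly as you suspect: every nonzero $z\in B_i$ satisfies $z^{q-1}=\omega^i$, hence $z^{d(q-1)}=1$, so $m_ir_i\equiv 1\pmod{d(q-1)}$ forces $z^{m_ir_i}=z$ on all of $B_i$; and such an $r_i$ exists because $d\mid q-1$ makes $\gcd(m_i,q-1)=1$ imply $\gcd(m_i,d(q-1))=1$.
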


\begin{proof} We first prove the necessity. Suppose that there is a positive integer $j, 1\le j\le d-1$ such that $j\not\equiv im_i\pmod{d}$ for any $i\in\{1, \ldots, d-1\}$. By Lemma \ref{lem4.1} (ii) and (iii), we have
$$A_j(x)\circ\left(u_iA_i^{m_i}(x)\right)=0, \quad 1\le i\le d-1, \quad A_j(x)\circ g(A_0(x))=0,$$
hence $ A_j(x)\circ f(x)=0$ and $f(x)$ is not a PP over $\mathbb{F}_{q^d}$. On the other hand, if $u_i=0$ for some $i, 1\le i\le d-1$, then it is trivial that $\{0\}\cup\{tm_t, 1\le t\le d-1, t\ne i\}$ is not a complete residue modulo $d$, so $f(x)$ is not a PP over $\mathbb{F}_{q^d}$. As for $g(x)$, by (ii) and a direct computation we have
$$A_0(x)\circ (f(x))=dg(A_0(x)).$$
It follows from Lemma \ref{lem2.1} that $f(x)$ is a PP only if $dg(A_0(x))$ is a surjective map from $\mathbb{F}_{q}$ to $\mathbb{F}_{q}$, which implies that $g(x)$ is a PP over $\mathbb{F}_{q}$ since
$A_0(\mathbb{F}_{q^d})=B_0=\mathbb{F}_q$.

Now we assume that $u_1\cdot\cdots\cdot u_{d-1}\in\mathbb{F}_q^\ast$, $\{0\}\cup\{im_i, 1\le i\le d-1\}$ is a complete residue modulo $d$ and $g(x)$ is a PP over $\mathbb{F}_q$. Then
$$A_j(x)\circ f(x)=du_i\omega^{-j}A_i^{m_i}(x),\,\, im_i\equiv j\pmod{d}, \, 1\le j\le d-1,$$ so we have the following commutative diagram
$$\xymatrix{
  \mathbb{F}_{q^d} \ar[d]_{A_i(x)} \ar[r]^{f}
                & \mathbb{F}_{q^d} \ar[d]^{A_j(x)}  \\
  B_i  \ar[r]_{du_i\omega^{-j}x^{m_i}}
                & B_j          } $$
for each $j, 1\le j\le d-1$, where $i$ is the integer with $im_i\equiv j\pmod{d}$, $B_i=\{A_i(x), x\in \mathbb{F}_{q^d}\}, 1\le i\le d-1$. By Lemma \ref{lem4.2}, the map
$$h_{ij}: B_i\to B_j, \,\, a\mapsto du_i\omega^{-j}a^{m_i},$$
is bijective if and only if $\gcd(m_i, q-1)=1$. If $\gcd(m_i, q-1)=1$, then the compositional inverse of $du_i\omega^{-j}x^{m_i}$ is $(du_i\omega^{-j})^{-r_i}x^{r_i}$, where $m_ir_i\equiv1\pmod{d(q-1)}$. Hence we have the following commutative diagrams
$$\xymatrix{ \mathbb{F}_{q^d} \ar[rr]^{f(x)} \ar[dr]_{\varphi_i}
                &  &    \mathbb{F}_{q^d} \ar[dl]^{\psi_i}    \\
                & B_i                }\quad i=0, 1, \ldots, d-1,
$$
where $\varphi_i(x)=A_i(x), i=0, 1, \ldots, d-1$, $\psi_0(x)=g^{-1}(A_0(x)/d)$ and $\psi_i(x)=(du_i\omega^{-j})^{-r_i}A_j^{r_i}(x), i=1, \ldots, d-1$.

Note that
$$x=\frac{1}{d}\sum_{i=0}^{d-1}\omega^i A_i(x)=\frac{1}{d}\sum_{i=0}^{d-1}\omega^i\varphi_i(x),$$
by Theorem \ref{th2.2}, we have
$$f^{-1}(x)=\frac{1}{d}\left(g^{-1}(A_0(x)/d)+\sum_{i=1, j\equiv im_i\pmod{d}}^{d-1}\omega^i(du_i\omega^{-j})^{-r_i}A_j(x)^{r_i}\right).$$
This completes the proof.\end{proof}

A polynomial $f(x)\in\mathbb{F}_q[x]$ is called a complete permutation polynomial (CPP) if both $f(x)$ and $f(x)+x$ are permutation polynomials of $\mathbb{F}_q$. By applying the above theorem, we have

\begin{coro}Let the notations be as in Theorem \ref{th4.1}, then
$$f(x)=g(A_0(x))+\sum_{i=0}^{d-1}u_iA_i(x)$$
is a CPP over $\mathbb{F}_{q^d}$ if and only if $\prod_{i=1}^{d-1}u_i(1+du_i\omega^{-i})\ne0$ and both $g(x)$ and $dg(x)+x$ are PPs over $\mathbb{F}_q$.\end{coro}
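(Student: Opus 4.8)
The plan is to deduce this directly from Theorem \ref{th4.1} by applying it twice, once to $f$ and once to $f(x)+x$. The point is that here every exponent equals $1$, so $f$ is exactly the polynomial treated in Theorem \ref{th4.1} with $m_1=\cdots=m_{d-1}=1$ (the linear term $u_0A_0(x)$ simply merges into $g(A_0(x))$, since $u_0A_0(x)=u_0\,\mathrm{id}(A_0(x))$, so I fold it into the polynomial acting on $A_0$). For this specialization the hypotheses of Theorem \ref{th4.1} collapse: $\{0\}\cup\{i m_i : 1\le i\le d-1\}=\{0,1,\ldots,d-1\}$ is automatically a complete residue system modulo $d$, and $\gcd(m_1\cdots m_{d-1},q-1)=\gcd(1,q-1)=1$ holds trivially. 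Hence Theorem \ref{th4.1} reduces to the statement that $f$ is a PP over $\mathbb{F}_{q^d}$ if and only if $u_i\neq0$ for $1\le i\le d-1$ and $g$ is a PP over $\mathbb{F}_q$.

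For the second half I would rewrite $f(x)+x$ in the same normal form. The key ingredient is the identity $x=\frac1d\sum_{i=0}^{d-1}\omega^iA_i(x)$ already recorded in the proof of Theorem \ref{th4.1}; it follows from $\sum_{i=0}^{d-1}\omega^{i(1+t)}=d\,\delta_{t,d-1}$, i.e.\ the orthogonality of the $d$-th roots of unity. Substituting this for the extra $x$ yields $f(x)+x=\hat g(A_0(x))+\sum_{i=1}^{d-1}v_iA_i(x)$, where $v_i=u_i+\omega^i/d$ and $\hat g(x)=g(x)+\tfrac1d x$ (the new $A_0$-coefficient $\tfrac1d$ again being absorbed into the polynomial acting on $A_0$). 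This is once more a polynomial of the type in Theorem \ref{th4.1} with all exponents $1$, so the same reduction applies: $f(x)+x$ is a PP if and only if $v_i\neq0$ for $1\le i\le d-1$ and $\hat g(x)=g(x)+\tfrac1d x$ is a PP over $\mathbb{F}_q$.

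It then remains to combine and rephrase the two lists. Since $d\mid q-1$ forces $\gcd(d,\operatorname{char}\mathbb{F}_q)=1$, the scalar $d$ is a unit in $\mathbb{F}_q$; thus $g(x)+\tfrac1d x$ is a PP if and only if $dg(x)+x$ is a PP, and $v_i=u_i+\omega^i/d=\frac{\omega^i}{d}\bigl(1+d u_i\omega^{-i}\bigr)$, so $v_i\neq0\iff 1+du_i\omega^{-i}\neq0$. Intersecting the conditions for $f$ with those for $f(x)+x$ gives $u_i\neq0$ and $1+du_i\omega^{-i}\neq0$ for each $1\le i\le d-1$, that is $\prod_{i=1}^{d-1}u_i(1+du_i\omega^{-i})\neq0$, together with ``$g$ and $dg(x)+x$ are PPs over $\mathbb{F}_q$'', which is exactly the claim.

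The only genuinely substantive step is the second one: correctly converting $f(x)+x$ back into the normal form of Theorem \ref{th4.1} via the root-of-unity identity, and checking that the resulting polynomial still meets the (now automatic) residue and gcd hypotheses so that Theorem \ref{th4.1} can be reapplied verbatim. Everything else is bookkeeping, namely tracking how the linear $A_0$-terms fold into the argument $g$ and verifying the elementary equivalence $v_i\neq0\iff 1+du_i\omega^{-i}\neq0$.
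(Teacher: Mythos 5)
Your proposal is correct, and it gets to the result by a somewhat different logical route than the paper. The paper never rewrites $f(x)+x$ in the normal form of Theorem \ref{th4.1}; instead it recomputes the compositions directly, $A_i(x)\circ(f(x)+x)=(1+du_i\omega^{-i})A_i(x)$ for $1\le i\le d-1$ and $A_0(x)\circ(f(x)+x)=dg(A_0(x))+A_0(x)$, draws the two commutative diagrams with induced maps $(1+du_i\omega^{-i})x$ on $B_i$ and $x+dg(x)$ on $B_0=\mathbb{F}_q$, and then asserts the equivalence --- implicitly re-running the AGW/local-criterion argument from the proof of Theorem \ref{th4.1} rather than citing its statement. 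You instead use the identity $x=\frac{1}{d}\sum_{i=0}^{d-1}\omega^iA_i(x)$ to recognize $f(x)+x$ as literally another instance of Theorem \ref{th4.1} with all exponents $1$, coefficients $v_i=u_i+\frac{\omega^i}{d}=\frac{\omega^i}{d}\left(1+du_i\omega^{-i}\right)$ and $A_0$-polynomial $g(x)+\frac{1}{d}x$, and then apply the theorem twice as a black box; the two computations agree, since $dv_i\omega^{-i}=1+du_i\omega^{-i}$ is exactly the paper's induced scalar on $B_i$. What your route buys is rigor at no extra cost: the ``only if'' direction and the fiber-injectivity verification come for free from the iff in Theorem \ref{th4.1}, whereas the paper's terse diagram argument leaves that machinery implicit; what the paper's route buys is that one sees the induced component maps directly without any re-normalization. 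One shared caveat: the statement's sum starts at $i=0$, and taken literally with $u_0\ne0$ the conditions would have to read ``$g(x)+u_0x$ and $d(g(x)+u_0x)+x$ are PPs''; you handle this explicitly by folding $u_0A_0(x)$ into $g(A_0(x))$, while the paper's diagrams silently drop $u_0$, so both proofs read the corollary under the same (reasonable) convention.
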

\begin{proof} We have
$$\xymatrix{
  \mathbb{F}_{q^d} \ar[d]_{A_i(x)} \ar[r]^{f(x)+x}
                & \mathbb{F}_{q^d} \ar[d]^{A_i(x)}  \\
  B_i  \ar[r]_{(1+du_i\omega^{-i})x}
                & B_j          }\quad 1\le i\le d-1 $$
and
$$\xymatrix{
  \mathbb{F}_{q^d} \ar[d]_{A_0(x)} \ar[r]^{f(x)+x}
                & \mathbb{F}_{q^d} \ar[d]^{A_0(x)}  \\
  \mathbb{F}_q  \ar[r]_{x+dg(x)}
                & \mathbb{F}_q          } $$
Hence both $f(x)$ and $f(x)+x$ are PPs over $\mathbb{F}_{q^d}$ if and only if $\prod_{i=1}^{d-1}u_i(1+du_i\omega^{-i})\ne0$ and both $g(x)$ and $dg(x)+x$ are PPs over $\mathbb{F}_q$. We are done.                \end{proof}

Theorem \ref{th4.1} extends Theorem 6.1 in \cite{Yuan22} greatly. We also have the following result, which improves Theorem 6.2 in \cite{Yuan22}.
\begin{thm} Let $n>1$ be a positive integer, $q$ a prime power and $g(x)\in\mathbb{F}_q[x]$. Then the the polynomial
$$f(x)=x^q-x+g(Tr(x)),$$
where $Tr(x)=x+x^q+\cdots+x^{q^{n-1}}$ is a PP over $\mathbb{F}_{q^n}$ if and only if $\gcd(n, q)=1$ and $g(x)$ is a PP over $\mathbb{F}_q$. Moreover, if $f(x)$ is a PP over $\mathbb{F}_{q^n}$ and $g^{-1}(x)$ is the compositional inverse of $g(x)$, then
$$f^{-1}(x)=\frac{1}{n}\left(g^{-1}(Tr(x)/n)-\frac{(n+1)}{2}Tr(x)+\sum_{i=1}^{n}ix^{q^{i}}\right).$$
\end{thm}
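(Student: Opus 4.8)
The plan is to apply the AGW criterion (Lemma \ref{lem2.2}) with both $\lambda$ and $\bar\lambda$ equal to the trace map $Tr:\mathbb{F}_{q^n}\to\mathbb{F}_q$, which is surjective. The first step is to identify the induced bottom map. Since $Tr(x^q)=Tr(x)$ we have $Tr(x^q-x)=0$, and since $Tr(x)\in\mathbb{F}_q$ the value $g(Tr(x))$ lies in $\mathbb{F}_q$, whence $Tr\big(g(Tr(x))\big)=n\,g(Tr(x))$. Adding these gives $Tr(f(x))=n\,g(Tr(x))$, so the square
$$\xymatrix{ \mathbb{F}_{q^n} \ar[d]_{Tr} \ar[r]^{f} & \mathbb{F}_{q^n} \ar[d]^{Tr} \\ \mathbb{F}_q \ar[r]_{ng(x)} & \mathbb{F}_q }$$
commutes with $h(x)=ng(x)$. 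By Lemma \ref{lem2.2}, $f$ is a PP of $\mathbb{F}_{q^n}$ if and only if $h=ng$ is a bijection of $\mathbb{F}_q$ and $f$ is injective on every fibre $Tr^{-1}(s)$.

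For the equivalence with the stated conditions I would argue as follows. If $f$ is a PP then $h=ng$ is a bijection; were $\gcd(n,q)>1$, then $n\equiv0$ in $\mathbb{F}_q$ and $h$ would be constant, impossible since $q\ge2$, so $\gcd(n,q)=1$, $n$ is invertible in $\mathbb{F}_q$, and $h=ng$ bijective forces $g$ to be a PP. Conversely, assuming $\gcd(n,q)=1$ and $g$ a PP, $h=ng$ is a bijection, and on a fibre $Tr^{-1}(s)$ the summand $g(Tr(x))$ equals the constant $g(s)$, so $f(x_1)=f(x_2)$ gives $x_1^q-x_1=x_2^q-x_2$, i.e. $x_1-x_2\in\mathbb{F}_q$; combined with $Tr(x_1-x_2)=n(x_1-x_2)=0$ and $n$ invertible this yields $x_1=x_2$. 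Hence $f$ is injective on each fibre, and Lemma \ref{lem2.2} gives that $f$ is a PP.

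For the compositional inverse I would use Theorem \ref{th2.2}. Writing $w=f(x)$ and $c=g(Tr(x))\in\mathbb{F}_q$, the relation $Tr(w)=nc$ gives $c=Tr(w)/n$ and $Tr(x)=g^{-1}(Tr(w)/n)$, while $w-c=x^q-x$. The heart of the computation is to invert the Artin--Schreier-type equation $x^q-x=z$ with $z:=w-Tr(w)/n$ (note $Tr(z)=0$). Telescoping the identity $x^{q^k}-x=\sum_{i=0}^{k-1}z^{q^i}$ over $k=0,\dots,n-1$ and using $Tr(x)=\sum_{k=0}^{n-1}x^{q^k}$ yields the key formula
$$nx=Tr(x)-\sum_{i=0}^{n-1}(n-1-i)\,z^{q^i}.$$
Setting $\psi_0(w)=g^{-1}(Tr(w)/n)$ and $\psi_1(w)=-\frac1n\sum_{i=0}^{n-1}(n-1-i)\big(w-Tr(w)/n\big)^{q^i}$, so that $\varphi_0=\psi_0\circ f=Tr$ and $\varphi_1=\psi_1\circ f=-\frac1n\sum_{i=0}^{n-1}(n-1-i)z^{q^i}$, the above reads $x=\frac1n\varphi_0(x)+\varphi_1(x)$; with $F(a_0,a_1)=a_0/n+a_1$ Theorem \ref{th2.2} then gives $f^{-1}(w)=\frac1n\,g^{-1}(Tr(w)/n)+\psi_1(w)$.

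The final step is purely computational: substitute $z^{q^i}=w^{q^i}-Tr(w)/n$, expand, use $\sum_{i=0}^{n-1}(n-1-i)=\binom{n}{2}$ and the reduction $w^{q^n}=w$, and collect the $Tr(w)$-terms and the $\sum_i i\,w^{q^i}$-terms to reach the closed form of the statement. I expect this coefficient bookkeeping---rather than any conceptual difficulty---to be the main obstacle, since the constants ($n^{-1}$ in $\mathbb{F}_q$, the factor $\frac12$, and the identification $w^{q^n}=w$) must be tracked precisely to land on the stated expression; by comparison the surjectivity of $Tr$ and the AGW step are routine, and one may sanity-check the eventual formula in the case $n=2$ by verifying $f^{-1}(f(x))=x$ directly.
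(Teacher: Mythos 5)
Your approach is essentially the paper's: the commutative square with $Tr$ on both vertical arrows and $ng$ on the bottom, fibre-injectivity for the permutation criterion, and a telescoping inversion of the Artin--Schreier part $x^q-x=z$ fed into Theorem \ref{th2.2}. Your criterion argument and your key identity
$$nx=Tr(x)-\sum_{i=0}^{n-1}(n-1-i)\,z^{q^i},\qquad z=x^q-x,$$
are both correct --- indeed more careful than the paper, whose corresponding identity $\sum_{i=1}^{n}i(x^q-x)^{q^{i}}=nx^{q^n}-Tr(x)$ is false: since $(x^q-x)^{q^i}=x^{q^{i+1}}-x^{q^i}$, Abel summation gives $\sum_{i=1}^{n}i(x^q-x)^{q^{i}}=nx^{q^{n+1}}-\sum_{i=1}^{n}x^{q^i}=nx^{q}-Tr(x)$, not $nx-Tr(x)$. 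The correct identity is the shifted one, $\sum_{i=1}^{n}i(x^q-x)^{q^{i-1}}=nx-Tr(x)$, which is a rewriting of yours.

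The genuine problem is your last step: the claim that the remaining coefficient bookkeeping ``reaches the closed form of the statement'' cannot be carried out, because the stated closed form is itself wrong (the paper's index error propagates into its theorem). Expanding your $\frac{1}{n}g^{-1}(Tr(w)/n)+\psi_1(w)$, using $\sum_{i=0}^{n-1}(n-1-i)=\frac{n(n-1)}{2}$, yields
$$f^{-1}(x)=\frac{1}{n}\left(g^{-1}(Tr(x)/n)-\frac{(n+1)}{2}Tr(x)+\sum_{i=1}^{n}ix^{q^{i-1}}\right),$$
i.e.\ with $x^{q^{i-1}}$ where the statement has $x^{q^{i}}$. These are not equal: since all the other ingredients are fixed by Frobenius, the printed expression equals $\bigl(f^{-1}(x)\bigr)^q$, not $f^{-1}(x)$. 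The $n=2$ sanity check you propose exposes this at once: take $q=3$, $g(x)=x$, so $f(x)=x^3-x+Tr(x)=2x^3$ on $\mathbb{F}_9$ with $f^{-1}(x)=2x^3$; your formula gives $2\left(2Tr(x)+x+2x^3\right)=2x^3$, while the printed formula gives $2\left(2Tr(x)+x^3+2x^9\right)=2x\ne f^{-1}(x)$. So your derivation is sound and in fact corrects the paper; you should finish the bookkeeping, state the corrected formula, and note explicitly that it differs from the printed one by a Frobenius twist.
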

\begin{proof} By the assumptions, we have the following diagram
$$\xymatrix{
  \mathbb{F}_{q^n} \ar[d]_{Tr(x)} \ar[r]^{f}
                & \mathbb{F}_{q^n} \ar[d]^{Tr(x)}  \\
  \mathbb{F}_q \ar[r]_{ng(x)}
                & \mathbb{F}_q          } $$
 Hence $f(x)$ is a  PP over $\mathbb{F}_{q^n}$  only if $\gcd(n, q)=1$ and $g(x)$ is a PP over $\mathbb{F}_q$. Moreover we have the following commutative diagrams
$$\xymatrix{ \mathbb{F}_{q^n} \ar[rr]^{f(x)} \ar[dr]_{\varphi_i}
                &  &    \mathbb{F}_{q^n} \ar[dl]^{\psi_i}    \\
                & \mathbb{F}_{q^n}                }\quad i=1, 2,
$$
where $\varphi_1(x)=Tr(x), \psi_1(x)=g^{-1}(Tr(x)/n)$, $\psi_2(x)=x$ and $\varphi_2(x)=f(x)=x^q-x+g(Tr(x))$.

Note that $\varphi_2(x)-g(\varphi_1(x))=x^q-x$ and
$$\sum_{i=1}^ni(x^q-x)^{q^i}=nx^{q^n}-Tr(x)=nx-\varphi_1(x).$$
It follows that
$$x=\frac{1}{n}\left(\varphi_1(x)+\sum_{i=1}^{n}i(\varphi_2(x)-g(\varphi_1(x)))^{q^{i}}\right).$$  Therefore the polynomial $f(x)$ is a PP over $\mathbb{F}_{q^n}$ if and only if $\gcd(n, q)=1$  and $g(x)$ is a PP over $\mathbb{F}_q$. By Theorem \ref{th2.2}, we have
$$f^{-1}(x)=\frac{1}{n}\left(\psi_1(x)+\sum_{i=1}^{n}i(\psi_2(x)-g(\psi_1(x)))^{q^{i}}\right)$$
$$=\frac{1}{n}\left(g^{-1}(Tr(x)/n)+\sum_{i=1}^{n}i(x-Tr(x)/n)^{q^{i}}\right)$$
$$=\frac{1}{n}\left(g^{-1}(Tr(x)/n)-\frac{(n+1)}{2}Tr(x)+\sum_{i=1}^{n}ix^{q^{i}}\right)$$
  This completes the proof.
                \end{proof}

\section{Concluding remarks}
If it is difficult to find  enough  surjections $\varphi_i, \psi_i, 1\le i\le t$ such that the following diagrams commute
$$\xymatrix{ \mathbb{F}_{q} \ar[rr]^{f(x)} \ar[dr]_{\varphi_i}
                &  &    \mathbb{F}_{q} \ar[dl]^{\psi_i}    \\
                & \mathbb{F}_{q}                }\quad i=1, \ldots, t
$$
and there exists a polynomial $F(x_1, \ldots, x_t)\in\mathbb{F}_q[X_1, \ldots, X_t]$ with 
$$F(\varphi_1(x), \ldots, \varphi_t(x))=x.$$
We can use the dual diagrams of the related diagrams
$$\xymatrix{
  A \ar[d]_{\psi} \ar[r]^{f^{-1}}
                & A \ar[d]^{\varphi}  \\
  S_i  \ar[r]_{h_i^{-1}}
                &   S_i        }$$
as we did in \cite{Y22}. In \cite{Y22}, we obtained the compositional inverses of some PPs by using one diagram and its dual diagram. Now we can make use of a new local method  to find the compositional inverses of some specified PPs. We believe, more PPs and their compositional inverses will be found or constructed by the method of this paper. We will continue the work in the further paper.


\begin{thebibliography}{10}


\bibitem{AGW11}A. Akbary, D. Ghioca, and Q. Wang, On constructing permutations
of finite fields, Finite Fields Their Appl., vol. 17, no. 1, pp. 51-67,
Jan. 2011.




\bibitem{CH02} R. S. Coulter and M. Henderson, The compositional inverse of a class
of permutation polynomials over a finite field, Bull. Austral. Math. Soc.,
65(2002), 521-526.



\bibitem{DY06} C. Ding and J. Yuan, A family of skew Hadamard difference sets,
J. Comb. Theory, Ser. A 113 (2006),  1526-1535.



\bibitem{LC07} Y.
Laigle-Chapuy, Permutation polynomials and applications to coding
theory, Finite Fields Appl. 13 (2007),  58--70.


\bibitem{LQW19} K. Li, L. Qu, and Q. Wang, Compositional inverses of permutation
polynomials of the form $x^rh(x^s)$ over finite fields, Cryptogr. Commun.,
11(2019),279-298.



\bibitem{LN97} R. Lidl, H. Niederreiter,
``Finite Fields",  Cambridge University Press, Cambridge, 1997.






\bibitem{NLQW21} T. Niu, K. Li, L. Qu, and Q. Wang,Finding compositional inverses of permutations from the AGW creterion, IEEE Trans. Inf. Theory, 67(2021), 4975-4985.


\bibitem{PL01} Y. H. Park, J. B. Lee, Permutation polynomials and group
permutation polynomials, Bull. Austral. Math. Soc. 63
(2001), 67--74.



\bibitem{TW14} A. Tuxanidy and Q. Wang, On the inverses of some classes of permutations of finite fields, Finite Fields Their Appl., 28(2014),244-281.
\bibitem{TW17} A. Tuxanidy and Q. Wang, Compositional inverses and complete mappings over finite fields, Discrete Appl. Math., 217(2017), 318-329.




\bibitem{W07} Q. Wang, Cyclotomic mapping permutation polynomials over finite
fields, in Sequences, Subsequences, Consequences. Berlin, Germany:
Springer, 2007, pp. 119-128.

\bibitem{W17} Q. Wang, A note on inverses of cyclotomic mapping permutation
polynomials over finite fields, Finite Fields Their Appl., 45(2017),422-427.


\bibitem{Wu14} B. Wu, The compositional inverse of a class of linearized permutation
polynomials over $\mathbb{F}_{2^n}, n$ odd, Finite Fields Their Appl., 29(2014), 34-48.

\bibitem{WL13} B. Wu and Z. Liu, The compositional inverse of a class of bilinear
permutation polynomials over finite fields of characteristic 2, Finite
Fields Their Appl., 24(2013), 136-147.
\bibitem{WL13J} B. Wu and Z. Liu, Linearized polynomials over finite fields revisited,
Finite Fields Their Appl., 22(2013), 79¨C100.

\bibitem{Y22} P. Yuan, Compositional Inverses of AGW-PPs, Adv. Math. Comm., doi:10.3934/amc.2022045.


\bibitem{Yuan22} P. Yuan, Permutation Polynomials and their Compositional Inverses, 2022, arXiv:2206.04252. [Online].

\bibitem{YD11} P. Yuan and C. Ding, Permutation polynomials over finite fields from a
powerful lemma, Finite Fields Their Appl., 17(2011), 560-574.

\bibitem{YD14} P. Yuan, C. Ding, Further results on permutation polynomials over finite fields. Finite Fields Appl. 27 (2014), 88-103.



\bibitem{ZYLHZ19} D. Zheng, M. Yuan, N. Li, L. Hu, and X. Zeng, Constructions of
involutions over finite fields, IEEE Trans. Inf. Theory, 65(2019), 7876-7883.

\bibitem{ZY18} Y. Zheng and Y. Yu, On inverse of permutation polynomials of
small degree over finite fields, II, 2018, arXiv:1812.11812. [Online].


\bibitem{ZWW20} Y. Zheng, Q. Wang, and W. Wei, On inverses of permutation polynomials of small degree over finite fields, IEEE Trans. Inf. Theory,  66(2020), 914-922,.


\bibitem{Z09} M. Zieve, On some permutation polynomials over $\mathbb{F}_q$ of the form
$x^rh(x^{(q-1)/d})$, Proc. Amer. Math. Soc., 137(2009), 2209-2216.














\end{thebibliography}
\end{document}